%
%
%


\documentclass[preprint,12pt]{elsarticle}

\makeatletter
\def\ps@pprintTitle{%
 \let\@oddhead\@empty
 \let\@evenhead\@empty
 \def\@oddfoot{}%
 \let\@evenfoot\@oddfoot}
\makeatother

\usepackage[utf8]{inputenc}
\usepackage[T1]{fontenc}
\usepackage{mathtools}
\usepackage{amsmath}
\usepackage{xcolor}



\definecolor{verde}{rgb}{0.2,0.6,0.2}
\definecolor{jpurple}{rgb}{0.9,0,0.9}

\usepackage[ruled,linesnumbered]{algorithm2e}
\SetKwInOut{Input}{Input}
\SetKwInOut{Output}{Output}
\usepackage{mathtools,bbold}
\usepackage{multicol}
\usepackage{mathrsfs}
\usepackage{fancyvrb}
\usepackage{algpseudocode}
\usepackage{amsmath,amsthm,amssymb}
\usepackage{dsfont}
\usepackage{booktabs}
\usepackage{siunitx}
\usepackage{float}
\usepackage[shortlabels]{enumitem}
\newtheorem{theorem}{Theorem}[section]
\newtheorem{lemma}[theorem]{Lemma}
\usepackage{hyperref}
\hypersetup{
    colorlinks=true,
    }

\newcommand{\rev}[1]{\textcolor{black}{#1}}

\theoremstyle{definition}

\newtheorem{example}[theorem]{Example}

\usepackage{todonotes}

\theoremstyle{remark}

\numberwithin{equation}{section}

\begin{document}

\title{Constructing Magic Squares: \rev{An} Integer Constraint Satisfaction Problem and a Fast \rev{Approach}}


\author[1]{João Vitor Pamplona\corref{cor1}}
\ead{joao.vitor.pamplona@ufsc.br}

\author[2]{Maria Eduarda Pinheiro}
\ead{mariae.pinheiro18@gmail.com}

\author[1]{Luiz-Rafael Santos}
\ead{l.r.santos@ufsc.br}

\cortext[cor1]{Corresponding author}

\address[1]{Departamento de Matemática, Universidade Federal de Santa Catarina, Campus Blumenau, Blumenau-SC, 89065-300, Brazil}
\address[2]{Departamento de Matemática, Universidade Federal de Santa Catarina,  Florianópolis-SC, 88040-900, Brazil}


  \begin{abstract}
Magic squares are a fascinating mathematical challenge that has intrigued mathematicians for centuries. Given a positive (and possibly large) integer \( n \), one of the main challenges that still remains is to find, within a \rev{reasonable} computational time, a magic square of order \( n \), that is, a square matrix of order \( n \) with unique integers from \( a_{\min} \) to \( a_{\max} \), such that the sum of each row, column, and diagonal equals a constant \( \mathcal{C}(A) \). 
In this work, we first present an integer constraint satisfaction problem  for constructing a magic square of order \( n \). Nonetheless, the solution time of this problem grows exponentially as the order increases. To overcome this limitation, we also propose a \rev{fast approach} that constructs magic squares depending on whether \( n \) is odd, singly even, or doubly even\rev{. Moreover, we provide a proof of the correctness of this novel approach.} Our numerical results show \rev{that our method can  construct} magic squares of order up to \num{70000} in less than \num{140} seconds, demonstrating its efficiency and scalability.
\end{abstract}

  \begin{keyword}
    Magic Squares \sep Constraint Satisfaction Problem \sep Mathematical Optimization \sep \rev{Mathematical Recreations}
    \end{keyword}

\maketitle



\section{Introduction}

Magic Squares are numerical puzzles that have captivated mathematicians and enthusiasts for centuries. While \rev{their} exact origins remain a mystery, a Chinese myth speaks of a turtle rising from the depths of the Lo River during a deluge. Adorning its shell was a peculiar design: a three-by-three grid bearing unique numbers, each row, column, and diagonal summing to fifteen. Mathematically, it can be expressed as the matrix:
\[
\begin{bmatrix}
4 & 9 &  2 \\ 
3 & 5 & 7 \\
8 & 1 & 6
\end{bmatrix}.
\]

There \rev{is other evidence} of Magic Squares not only in China but  India as well, dating back to before our era. For instance, a magic square of order $3$ appears in an 8th-century Arabic manuscript linked to Apollonius of Tyana \citep{berthelot1888collection}.

Around 1530, \citet{von1531occulta} wrote \emph{De Occulta Philosophia}, in which he discussed Magic Squares of order 3 to 9. Those squares were associated with astrological planets: Saturn (associated with order 3), Jupiter (order 4), Mars (order 5), \rev{the Sun} (order 6), Venus (order 7), Mercury (order 8), and \rev{the Moon} (order 9).

Formally, given an integer  $n \geq 3$, and a positive integer $a_{\min},$  one can define a \emph{magic square of order $n$} as a square matrix $A \coloneqq [a_{ij}] \in \mathds{N}^{n \times n}$ with \emph{magic constant}
\[
\mathcal{C}(A)\coloneqq \dfrac{n(n^2+1)}{2} + n(a_{\min}-1), 
\]
that satisfies the following properties:
\begin{enumerate}[(i)]
\item Each component in $A$ is unique.
\item The minimum element in $A$ is $a_{\min}$ and the maximum is $a_{\max} \coloneqq  a_{\min} + n^2 -1 $,
\item  For each $j\in \{1,\ldots, n\}$,  $ \sum_{i=1}^{n} a_{ij}= \mathcal{C}(A)$, \emph{i.e.}, the sum of the elements of each column is $\mathcal{C}(A)$. 
\item  For each $i\in \{1,\ldots, n\}$,  $ \sum_{j=1}^{n} a_{ij} = \mathcal{C}(A) $. Thus, the sum of the elements of each row is equal to $\mathcal{C}(A)$.
\item The trace of $A$ is equal to $\mathcal{C}(A)$, that is, $\sum_{i=1}^n a_{ii} =  \mathcal{C}(A)$.
\item The sum of the secondary diagonal is equal to $\mathcal{C}(A)$, \emph{i.e.}, $$\sum_{i=1}^n a_{i,n-i+1} =  \mathcal{C}(A).$$

\end{enumerate}

More than being a mathematical puzzle, Magic Squares have been used in applications such as cryptography, computer science, and even in the design of games. For instance, \citet{centermass}
 shows that Magic Squares create a very tiny part of square matrices \rev{whose} mass center and their geometrical center coincide. \citet{introductionapll} presents some of the most important unsolved problems  and some physical applications of Magic Squares. Some Linear Algebra \rev{properties} of Magic Squares are also presented in \citet{Lee:2006tw,Essen:1990fd,LOLY20092659}. Besides these properties, Magic Squares are also studied even in pure Algebra, as can be seen in \citet{algebra1,algebra2}.  
 
 In this short note, we present two novel approaches for constructing Magic Squares of arbitrary order. Our first approach frames the problem as a Constraint Satisfaction Problem (CSP). This strategy of reformulating mathematical puzzles as CSPs is well-established in optimization literature. Prior work includes CSP models for edge-matching puzzles \cite{csppuzzle}, the classic Sudoku puzzle \cite{sudokupuzzle2}, and even a variation known as the Jidoku puzzle \cite{sudokupuzzle}.
 
 To the best of our knowledge, the only prior work that formulates the construction of Magic Squares as a CSP model is that of \citet{crawford2009solving}. However, \rev{their approach only considers $a_{min}=1$}.  Moreover, their computational experiments are limited to Magic Squares of order \( 7 \) only, taking 487 seconds to solve.  In contrast, the CSP model presented in this work is applicable to Magic Squares of any order. Although this model always yields a valid Magic Square when a solution is found, it becomes computationally expensive for large values of \( n \).

To overcome this limitation, we introduce a second approach that leverages the divisibility properties of $n$ to efficiently generate Magic Squares. To put our \rev{method} in perspective with existing \rev{approaches}, the work by \citet{heu1} proposed a two-phase heuristic for solving magic squares. However, the authors report that the method proposed by them requires over two hours to construct a magic square of order 500, whereas our \rev{method} completes the task in under \num{0.5} milliseconds; see our numerical experiments below. Moreover,  \citet{heu4} focuses on an algorithm to construct all magic squares of order $4$.
Recent works by \citet{heu2,heu3} introduced multistage evolutionary algorithms for magic square construction. Their experimental results show a solution time of \num{6} minutes for a magic square order 90; meanwhile, our \rev{algorithm} achieves this in under \num{e-4} seconds. 

\rev{Most of the approaches mentioned above do not offer a guarantee of correctness. In contrast, we establish the validity of our fast approach and ensure it consistently produces a magic square. Furthermore, we provide extensive numerical results showing reliability across a wide range of cases, for magic squares of orders as large as $n=\num{70000}$.}

The remainder of this paper is organized as follows. In Section~\ref{sec:CSP}, we present the CSP model for constructing Magic Squares. In Section~\ref{sec:fastapproach}, we introduce our \rev{fast} approach, which is divided into three cases based on the order of the Magic Square. \rev{Section \ref{sec:correctness} is devoted to proving the correctness of this novel approach}.
In Section~\ref{sec:numerical}, we present numerical results comparing the performance of \rev{both approaches (the CSP formulation and the fast \rev{approach}).} Finally, in Section~\ref{sec:conclusion}, we offer concluding remarks and discuss potential future research directions.

\section{Integer Constraint Satisfaction Problem }\label{sec:CSP}
In this section\rev{,} we show that a magic square of order $n$ with magic constant $\mathcal{C}(A)$ can be found by solving a constraint satisfaction problem (CSP) where the variables are binary. More precisely, given  $i,j \in [1,n] := \{1, \cdots, n\}$ and a number  $k \in [a_{\min},a_{\max}],$ the variable $x_{ij}^k$ \rev{takes the following values}
  $$x_{ij}^k =
  \begin{cases}
    1, & \text{if number $k$ is in $\{i,j\}$}, \\
    0, & \text{otherwise}.
  \end{cases}$$

Hence, our goal is to find $x \in \{0,1\}^{n \times n \times n^2}$ that satisfies the following \rev{properties}
\begin{subequations}
\allowdisplaybreaks
\label{MCSP}
\begin{align}
\sum_{i=1}^n \sum_{j=1}^n x_{ij}^k 
&= 1,  
\quad k \in [a_{\min},a_{\max}],
\label{MCSP:1} \\
\sum_{k=a_{\min}}^{a_{\max}} x_{ij}^k 
&= 1, 
\quad i,j \in [1,n],
\label{MCSP:2} \\
\sum_{j=1}^n \sum_{k=a_{\min}}^{a_{\max}} k x_{ij}^k 
&= \mathcal{C}(A), 
\quad i \in [1,n],
\label{MCSP:3} \\
\sum_{i=1}^n \sum_{k=a_{\min}}^{a_{\max}} k x_{ij}^k 
&= \mathcal{C}(A), 
\quad j \in [1,n],
\label{MCSP:4} \\
\sum_{i=1}^n \sum_{k=a_{\min}}^{a_{\max}} k x_{ii}^k 
&= \mathcal{C}(A),
\label{MCSP:5} \\
\sum_{i+j=n+1} \sum_{k=a_{\min}}^{a_{\max}} k x_{ij}^k 
&= \mathcal{C}(A),
\label{MCSP:6} \\
x_{ij}^k 
\in \{0,1\}, 
\quad &i,j \in [1,n], 
\quad k \in [a_{\min}, a_{\max}].
\label{MCSP:7}
\end{align}
\end{subequations}

The rationale of the model is that, for instance, Constraints \eqref{MCSP:1} and \eqref{MCSP:2} ensure, respectively, that  each number can be \rev{assigned to} only one cell and each cell must contain a number $k \in [a_{\min}, a_{\max}]$.  The guarantee that the sum of each row and column of $A$ is equal to $\mathcal{C}(A)$ is given by Constraints~\eqref{MCSP:3} and \eqref{MCSP:4}. Constraints~\eqref{MCSP:5} and \eqref{MCSP:6} enforce that the trace and the sum of the secondary diagonal of $A$ \rev{are} $\mathcal{C}(A)$. Finally, Constraint~\eqref{MCSP:7} \rev{leads} to 
  $$x_{ij}^k =
  \begin{cases}
    1, & \text{if number $k$ is in $\{i,j\}$}, \\
    0, & \text{otherwise},
  \end{cases}$$
for all  $i,j \in [1,n]$, and $ k \in [a_{\min}, a_{\max}].$
Based on that, each \rev{entry} $a_{ij}$ of the magic square $A$ is given by 
$$
a_{ij} = k \in [a_{\min}, a_{\max}]: x_{i,j}^k = 1, \quad i,j \in [1,n].
$$
\rev{We conjecture that the problem is NP-hard, though a formal proof is beyond the scope of this work}. Our focus here is on solution techniques, not a formal complexity analysis\rev{.}

As mentioned earlier, an integer constraint satisfaction problem can be computationally challenging, \rev{a fact that is corroborated}  by the numerical tests presented later in this paper.  

To mitigate this computational burden, in the next section we propose a new algorithm that can efficiently construct Magic Squares of any size. This \rev{algorithm} offers a substantial performance improvement compared to the traditional CSP solver, or other known methods, especially for larger orders.

\section{A Fast Approach}\label{sec:fastapproach}

In this section, we present algorithms for constructing Magic Squares of arbitrary order $n$. Without loss of generality, we consider $a_{\min} \coloneqq 1$. If $a_{\min} \neq 1,$ after solving the problem with $a_{\min} =1$ and getting $a_{ij}$, we can redefine $a_{ij}$ to be $a_{ij} + a_{\min} - 1$.

We categorize our approach into three distinct cases, each requiring a specific algorithmic solution:
\begin{enumerate}
    \item \emph{Odd Order}: When $n$ is odd, we employ the well-established Siamese Method, a technique attributed to \citet{siam};

    \item \emph{Doubly Even Order}: For $n$ divisible by $4$, we introduce a novel algorithm that leverages a specific pattern of number placement within the square;

    \item \emph{Singly Even Order}: When $n$ is even but not divisible by $4$, we propose a novel \rev{approach} that systematically fills the square, ensuring the magic square sum property.
\end{enumerate}

\subsection{Odd Magic Squares}
\label{subsec:hindu}

The Siamese method, also known as the De La Loubere method, \rev{attributed to \citet{siam}}, is a well-known algorithm for constructing odd-order magic squares. It is based on the principle of placing numbers in a specific pattern to ensure that the sum of each row, column, and diagonal is equal. The algorithm works as follows: \rev{the numbers are distributed in order}, from $1$ to $n^2$, in the positions of the matrix $A \coloneqq [a_{ij}] \in \mathds{N}^{n \times n}$, with $n$ odd. Recall that $a_{\min} = 1$ and $a_{\max} = n^2$. The steps to achieve the desired magic square are \rev{summarized} next:

\begin{enumerate}
\item The integer $1$ is placed in the central position of the first \rev{row}; that is, \rev{set} $a_{1,(n+1)/2} = 1$.

\item For any $k=1,\ldots,n^2-1$ already allocated, $k+1$ will be allocated in the position that remains:
\begin{enumerate}
 
  \item on the upper right diagonal, if \rev{it exists};
  \item in the last row of the column to \rev{the} right, if there is no upper row;
  \item in the first column of the top row, if there is no column on the right;

\item in the same column but in the \rev{row below} if the top right diagonal position is already occupied or if $k$ is allocated in $a_{1,n}$.

\end{enumerate}

\end{enumerate}

\rev{The correctness of the Siamese method is demonstrated in \citet{reference_siam}. Specifically, for any odd $n$, the matrix $A$ generated by the Siamese method is a magic square with the magic constant $\mathcal{C}(A).$}

\subsection{Doubly Even Magic Squares}\label{subsec:double}

We now delineate our \rev{approach} for constructing a Magic Square of order $n$ when $n$ is doubly even, meaning that $n$ is divisible by $4$. The approach is based on the following steps, which are \rev{detailed in} Algorithm~\ref{algo:doubleeven}. Again, recall we consider $a_{\min} = 1$ and $a_{\max} = n^2$.

\begin{algorithm}[H]
  \caption{\rev{Doubly Even Magic Square}}
 \label{algo:doubleeven}
   \SetKwInput{Input}{Input~}
  \Input{$n$ and $a_{\min}$}

 Consider the following division of blocks for matrix $A$ as shown \rev{below}:
\[
A=\begin{bmatrix}
X & Y & Y & X \\
Y & X & X & Y \\
Y & X & X & Y \\
X & Y & Y & X
\end{bmatrix},
\]
where both blocks of type $X$ and $Y$ are of order $\Big(\dfrac{n}{4} \times \dfrac{n}{4}\Big)$.  \label{step1}

 Compute  $a_ {ij}$ as:
\[
a_{ij} = \begin{cases}
(i-1)n + j, & \text{if }a_{ij} \in X, \\ 
n^2 + 1 - (n(i-1) + j), & \text{if }a_{ij}\in Y.
\end{cases}
\] 
\label{step2}
  \end{algorithm}
The key to this novel approach is the partitioning of the matrix into blocks, two of which are filled with numbers in a straightforward manner, while the other two blocks are \rev{filled} with their complements. This ensures that the sum of each row, column, and diagonal remains constant.

We \rev{now present a small example} to show how the algorithm works.

\begin{example}
Consider $n=4$, \emph{i.e.}, the simplest example for \rev{a} doubly even Magic Square. Then, following Step~\ref{step1}, we have the following partition of the matrix $A$:
\begin{align*}
    a_{11}, a_{14}, a_{22}, a_{23}, a_{32}, a_{33}, a_{41}, a_{44} & \in X, \\ 
    a_{12}, a_{13}, a_{21}, a_{24}, a_{31}, a_{34}, a_{42}, a_{43} & \in Y.
\end{align*}
Now, Step~\ref{step2} yields the following:
\[
a_{ij} = \begin{cases}
(i-1)4 + j, & \text{if }a_{ij}\in X, \\ 
17 - (4(i-1) + j), & \text{if }a_{ij}\in Y.
\end{cases}
\]
Hence,
\begin{multicols}{2}
\begin{itemize}
\item $a_{11}$ = $(1-1)4 + 1$ = 1
\item $a_{12}$ = $17 - ((1-1)4 + 2)$ = 15
\item $a_{13}$ = $17 - ((1-1)4 + 3)$ = 14
\item $a_{14}$ = $(1-1)4 + 4$ = 4
\item $a_{21}$ = $17 - ((2-1)4 + 1)$ = 12
\item $a_{22}$ = $(2-1)4 + 2$ = 6
\item $a_{23}$ = $(2-1)4 + 3$ = 7
\item $a_{24}$ = $17 - ((2-1)4 + 4)$ = 9
\item $a_{31}$ = $17 - ((3-1)4 + 1)$ = 8
\item $a_{32}$ = $(3-1)4 + 2$ = 10
\item $a_{33}$ = $(3-1)4 + 3$ = 11
\item $a_{34}$ = $17 - ((3-1)4 + 4)$ = 5
\item $a_{41}$ = $(4-1)4 + 1$ = 13
\item $a_{42}$ = $17 - ((4-1)4 + 2)$ = 3
\item $a_{43}$ = $17 - ((4-1)4 + 3)$ = 2
\item $a_{44}$ = $(4-1)4 + 4$ = 16
\end{itemize}
\end{multicols}
Finally, the resultant matrix is given by 
\[
A=\begin{bmatrix}
1 & 15 & 14 & 4 \\
12 & 6 & 7 & 9 \\
8 & 10 & 11 & 5 \\
13 & 3 & 2 & 16
\end{bmatrix}.
\]

Note that the matrix above is in fact a 
magic square \rev{of} order $4$ with $a_{\min} =1$, with $\mathcal{C}(A) = 34$. 
\end{example}

\subsection{Singly Even Magic Squares}\label{subsec:singleeven}

Our last proposal makes use of the algorithm for an odd Magic Square to construct a Magic Square of order $n$ when $n$ \rev{is} singly even; that is, we consider now $n$ an even number that is not \rev{divisible} by $4$. \rev{Again, without loss of generality, we consider $a_{\min}=1$.} Algorithm~\ref{algo:singleeven} describes the steps to construct the desired Magic Square.

\begin{algorithm}[H]
  \caption{Singly Even Magic Square}
 \label{algo:singleeven}
   \SetKwInput{Input}{Input~}
  \Input{$n$ and $a_{\min}$}

Consider    
$
A=\begin{bmatrix}
A^1 & A^3 \\
A^4 & A^2 \\
\end{bmatrix},
$ 
with $A^1, \cdots, A^4 $ as follows: \\ 
  \For{ $\ell = \{1,  \dotsc, 4\}$}
  {Create $A^\ell$ the Magic Square of odd order \rev{ using the Siamese method (see Section~\ref{subsec:hindu})}, with $a^\ell_{\min} = (\ell - 1) \dfrac{n^2}{4} +1$ the minimum \rev{entry} of $A^\ell$.\label{algo:step3}\\ 
  \uIf{$\ell \in \{1,4\}$ \label{step4}}{Consider the following partition of $A^\ell$
 \[
 A^{\ell}=\begin{bmatrix}
A^{\ell}_1 & x_{\ell} & A^{\ell}_{2} \\
y_{\ell}^\top & w_{\ell}^\top & \alpha_{\ell} \\
A^{\ell}_3 & z_{\ell} & A^{\ell}_4
\end{bmatrix}, 
\]
with $A^{\ell}_r \in \mathds{N}^{m \times m},$ for $r\in [1,4] $, $  \quad x_\ell, w_\ell, y_\ell, z_\ell \in \mathds{N}^m, \quad \alpha_\ell \in \mathds{N}$ and $m \coloneqq \frac{n-2}{4}.$
  \label{algo:step5}
  }
  \Else{Consider the following partition of $A^\ell$ 
 \[  
 A^{\ell}=\begin{bmatrix}
 \rev{B^{\ell}_1} &  \rev{B^{\ell}_2}
\end{bmatrix},
\]
where $ \rev{B^{\ell}_1}  \in \mathds{N}^{\frac{n}{2} \times \frac{(n+6)}{4}}$ and  $ \rev{B^{\ell}_2}  \in \mathds{N}^{\frac{n}{2} \times \frac{(n-6)}{4}}$.  \label{algo:step7}}
    }
Exchange the following sub-blocks
\begin{multicols}{2}
\begin{enumerate}

   \item $A^1_1 \leftrightarrow A^4_1$
\item $A^1_3 \leftrightarrow A^4_3$
\item $w_1 \leftrightarrow w_4$
\item $\rev{B^3_2} \leftrightarrow \rev{B^2_2}$
\end{enumerate}
\end{multicols}
\label{algo:step10}
Return the matrix $A$ .
\end{algorithm}

We show next how the algorithm \rev{works} for a simple case.

\begin{example}
  
Consider $n=10$. First, we create the submatrices $A^1, \cdots, A^4$, as described in Step~\ref{algo:step3}:
\begin{center}
A = \begin{tabular}{|ccccc|ccccc|}
\cline{1-10} 
17 & 24 & 1 & 8 & 15 & 67 & 74 & 51 & 58 & 65\\
23 & 5 & 7 & 14 & 16 & 73 & 55 & 57 & 64 & 66\\
4 & 6 & 13 & 20 & 22 & 54 & 56 & 63 & 70 & 72\\
10 & 12 & 19 & 21 & 3 & 60 & 62 & 69 & 71 & 53\\
11 & 18 & 25 & 2 & 9 & 61 & 68 & 75 & 52 & 59\\
\cline{1-10} 
92 & 99 & 76 & 83 & 90 & 42 & 49 & 26 & 33 & 40 \\
98 & 80 & 82 & 89 & 91 & 48 & 30 & 32 & 39 & 41\\
79 & 81 & 88 & 95 & 97 & 29 & 31 & 38 & 45 & 47\\
85 & 87 & 94 & 96 & 78 & 35 & 37 & 44 & 46 & 28\\
86 & 93 & 100 & 77 & 84 & 36 & 43 & 50 & 27 & 34  \\ 
\cline{1-10}
\end{tabular}
\end{center}

Then, we partition the sub-matrix as described in Steps~\ref{algo:step5} and \ref{algo:step7}.

\begin{center}
A = \begin{tabular}{|ccccc|ccccc|}
\cline{1-10} 
\textcolor{green}{17} & \textcolor{green}{24} & 1 & 8 & 15 & 67 & 74 & 51 & 58 & \textcolor{purple}{65}\\
\textcolor{green}{23} & \textcolor{green}{5} & 7 & 14 & 16 & 73 & 55 & 57 & 64 & \textcolor{purple}{66}\\
4 & 6 & \textcolor{blue}{13} & \textcolor{blue}{20} & 22 & 54 & 56 & 63 & 70 & \textcolor{purple}{72}\\
\textcolor{red}{10} & \textcolor{red}{12} & 19 & 21 & 3 & 60 & 62 & 69 & 71 & \textcolor{purple}{53}\\
\textcolor{red}{11} & \textcolor{red}{18} & 25 & 2 & 9 & 61 & 68 & 75 & 52 & \textcolor{purple}{59}\\
\cline{1-10} 
\textcolor{green}{92} & \textcolor{green}{99} & 76 & 83 & 90 & 42 & 49 & 26 & 33 & \textcolor{purple}{40}\\
\textcolor{green}{98} & \textcolor{green}{80} & 82 & 89 & 91 & 48 & 30 & 32 & 39 & \textcolor{purple}{41}\\
79 & 81 & \textcolor{blue}{88} & \textcolor{blue}{95} & 97 & 29 & 31 & 38 & 45 & \textcolor{purple}{47}\\
\textcolor{red}{85} & \textcolor{red}{87} & 94 & 96 & 78 & 35 & 37 & 44 & 46 & \textcolor{purple}{28}\\
\textcolor{red}{86} & \textcolor{red}{93} & 100 & 77 & 84 & 36 & 43 & 50 & 27 & \textcolor{purple}{34} \\
\cline{1-10}
\end{tabular}
\end{center}

Finally, we permute the sub-blocks as described in Step~\ref{algo:step10} (in this example sub-blocks with the same color are permuted).

\begin{center}
A = \begin{tabular}{|ccccc|ccccc|}
\cline{1-10} 
\textcolor{green}{92} & \textcolor{green}{99} & 1 & 8 & 15 & 67 & 74 & 51 & 58 & \textcolor{purple}{40}\\
\textcolor{green}{98} & \textcolor{green}{80} & 7 & 14 & 16 & 73 & 55 & 57 & 64 & \textcolor{purple}{41}\\
4 & 6 & \textcolor{blue}{88} & \textcolor{blue}{95} & 22 & 54 & 56 & 63 & 70 & \textcolor{purple}{47}\\
\textcolor{red}{85} & \textcolor{red}{87} & 19 & 21 & 3 & 60 & 62 & 69 & 71 & \textcolor{purple}{28}\\
\textcolor{red}{86} & \textcolor{red}{93} & 25 & 2 & 9 & 61 & 68 & 75 & 52 & \textcolor{purple}{34}\\
\cline{1-10} 
\textcolor{green}{17} & \textcolor{green}{24} & 76 & 83 & 90 & 42 & 49 & 26 & 33 & \textcolor{purple}{65}\\
\textcolor{green}{23} & \textcolor{green}{5} & 82 & 89 & 91 & 48 & 30 & 32 & 39 & \textcolor{purple}{66}\\
79 & 81 & \textcolor{blue}{13} & \textcolor{blue}{20} & 97 & 29 & 31 & 38 & 45 & \textcolor{purple}{72}\\
\textcolor{red}{10} & \textcolor{red}{12} & 94 & 96 & 78 & 35 & 37 & 44 & 46 & \textcolor{purple}{53}\\
\textcolor{red}{11} & \textcolor{red}{18} & 100 & 77 & 84 & 36 & 43 & 50 & 27 & \textcolor{purple}{59} \\
\cline{1-10}
\end{tabular}
\end{center}

As expected, $A$ as described above is a Magic Square with order $10$ and $a_{\min} = 1$. 
\end{example}

\section{Correctness of our novel approaches}\label{sec:correctness}

\rev{
In this section, we establish the correctness of Algorithms~\ref{algo:doubleeven} and \ref{algo:singleeven}, which are designed to construct magic squares of doubly even and singly even orders, respectively.} 

\subsection{Algorithm~\ref{algo:doubleeven}}
\rev{\rev{We begin} by proving the correctness of Algorithm~\ref{algo:doubleeven}, that is, that our method yields a valid magic square for all doubly even values of $n$. We first establish a lemma concerning the symmetry of the summations.}

\begin{lemma}\label{lema1}
\rev{Let $n$ be a doubly even integer, and define the outer and inner index sets as $\mathcal{B}_{\rev{\text{out}}} := \{1, \dots, \frac{n}{4}\} \cup \{\frac{3n}{4}+1, \dots, n\}$ and $\mathcal{B}_{\rev{\text{in}}} := \{\frac{n}{4}+1, \dots, \frac{3n}{4}\}$. The sum of the integers in the middle two quarters is equal to the sum of the integers in the first and last quarters. That is:
\begin{equation}\label{sum}
\sum_{j \in \mathcal{B}_{\rev{\text{out}}} } j = \sum_{j\in\mathcal{B}_{\rev{\text{in}}} }  j.
\end{equation}
}
\end{lemma}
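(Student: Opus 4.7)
The plan is to exploit the symmetry of $\{1,\dots,n\}$ about its midpoint $(n+1)/2$. Since $n$ is doubly even, write $n = 4m$, so that
\[
\mathcal{B}_{\text{out}} = \{1,\dots,m\} \cup \{3m+1,\dots,4m\}, \qquad \mathcal{B}_{\text{in}} = \{m+1,\dots,3m\},
\]
and both sets have cardinality $2m = n/2$. The key observation is that each set is closed under the reflection $j \mapsto n+1-j$: if $j \in \{1,\dots,m\}$ then $n+1-j \in \{3m+1,\dots,4m\}$, so $j$ and $n+1-j$ both lie in $\mathcal{B}_{\text{out}}$; similarly, if $j \in \{m+1,\dots,2m\}$ then $n+1-j \in \{2m+1,\dots,3m\}$, so both lie in $\mathcal{B}_{\text{in}}$.

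The next step is to group each set into $m = n/4$ reflection pairs, each of which sums to $n+1$. This yields immediately
\[
\sum_{j \in \mathcal{B}_{\text{out}}} j = m(n+1) = \frac{n(n+1)}{4} = \sum_{j \in \mathcal{B}_{\text{in}}} j,
\]
which is the desired equality. As a sanity check, one can verify this directly via closed-form arithmetic-series formulas: the left-hand side equals $\tfrac{m(m+1)}{2} + \sum_{k=1}^{m}(3m+k) = 4m^2 + m$, and the right-hand side equals $\sum_{k=1}^{2m}(m+k) = 4m^2 + m$.

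There is no real obstacle here; the whole proof is a one-line pairing argument once one notices that both index sets are symmetric about $(n+1)/2$. The only care that needs to be taken is verifying that the doubly even hypothesis $4 \mid n$ is what makes the pairing well-defined (in particular, that the pairs inside $\mathcal{B}_{\text{in}}$ split cleanly into $m$ pairs with no fixed point, since $n+1$ is odd and thus never equals $2j$). Writing $n=4m$ handles this cleanly, and the argument can be presented in just a few lines.
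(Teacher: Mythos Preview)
Your proof is correct. Your primary argument differs from the paper's: you exploit the reflection symmetry $j \mapsto n+1-j$ of $\{1,\dots,n\}$, observe that both $\mathcal{B}_{\text{out}}$ and $\mathcal{B}_{\text{in}}$ are invariant under this map (with no fixed points, since $n+1$ is odd), and pair off each set into $n/4$ pairs summing to $n+1$; this gives the common value $\tfrac{n(n+1)}{4}$ conceptually, without any summation formulas. The paper instead proceeds by direct computation: it writes $n=4k$, applies the arithmetic-series formula $\sum_{a}^{b} j = \tfrac{(b-a+1)(a+b)}{2}$ to each piece, and checks that both sides equal $4k^2+k$. Your sanity-check paragraph is essentially the paper's entire proof, so you have in fact subsumed their argument; the pairing viewpoint is a nice bonus in that it explains \emph{why} the identity holds rather than merely verifying it, and it makes transparent where the hypothesis $4\mid n$ enters.
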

    \begin{proof}
  
\rev{Consider $n = 4k$ for some $k\in \mathds{N}$. Using the arithmetic series formula $\sum_{j=a}^{b} j = \frac{(b-a+1)(a+b)}{2},$ we evaluate both sides of \eqref{sum}. The left-hand side, consisting of the first and last quarters of the range, yields:
\[\sum_{j \in \mathcal{B}_{\rev{\text{out}}} } j =  \sum_{j=1}^{k} j + \sum_{j=3k+1}^{4k} j = \frac{k(k+1)}{2} + \frac{k(7k+1)}{2} = 4k^2 + k. \]
Similarly, the right-hand side comprises the middle two quarters:
 \[\sum_{j \in \mathcal{B}_{\rev{\text{in}}} } j = \sum_{j=k+1}^{3k} j = \frac{2k(4k+1)}{2} = 4k^2 + k. \]
That is, the identity holds for any doubly even $n$.}
\end{proof}

\rev{We now show that the sum of each row of $A$ produced by Algorithm~\ref{algo:doubleeven} is equal to the magic constant $\mathcal{C}(A).$} 
\begin{lemma}\label{rowdouble}
    \rev{Assuming $n$ is doubly even, Algorithm~\ref{algo:doubleeven} produces a matrix $A$ where each row sum is equal to the magic constant $\mathcal{C}(A) = \dfrac{n(n^2+1)}{2}$.}
\end{lemma}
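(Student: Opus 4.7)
The plan is to compute the row sum for an arbitrary row $i$ by cases on whether $i$ belongs to $\mathcal{B}_{\text{out}}$ or $\mathcal{B}_{\text{in}}$, then exploit the pairwise identity $a^{X}_{ij} + a^{Y}_{ij} = n^2 + 1$ together with Lemma~\ref{lema1} to produce the magic constant. First I would record the block pattern from Step~\ref{step1}: when $i \in \mathcal{B}_{\text{out}}$, the entry $a_{ij}$ lies in an $X$-block exactly when $j \in \mathcal{B}_{\text{out}}$ and in a $Y$-block when $j \in \mathcal{B}_{\text{in}}$; when $i \in \mathcal{B}_{\text{in}}$, the roles of $X$ and $Y$ swap. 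In both cases $|\mathcal{B}_{\text{out}}| = |\mathcal{B}_{\text{in}}| = n/2$.

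Next, applying the closed-form rules in Step~\ref{step2}, for $i \in \mathcal{B}_{\text{out}}$ I would write
\[
\sum_{j=1}^n a_{ij} = \sum_{j \in \mathcal{B}_{\text{out}}} \bigl((i-1)n + j\bigr) + \sum_{j \in \mathcal{B}_{\text{in}}} \bigl(n^2 + 1 - n(i-1) - j\bigr),
\]
and split the result into three pieces: (i) the $(i-1)n$ contributions, which yield $\frac{n}{2}(i-1)n$ from the first sum and $-\frac{n}{2}(i-1)n$ from the second and therefore cancel; (ii) the constant contribution $\frac{n}{2}(n^2+1)$ coming from the second sum; and (iii) the index-dependent contribution $\sum_{j \in \mathcal{B}_{\text{out}}} j - \sum_{j \in \mathcal{B}_{\text{in}}} j$, which is zero by Lemma~\ref{lema1}. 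What remains is exactly $\frac{n(n^2+1)}{2} = \mathcal{C}(A)$.

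For $i \in \mathcal{B}_{\text{in}}$ the decomposition is identical except that the $X$-part is summed over $j \in \mathcal{B}_{\text{in}}$ and the $Y$-part over $j \in \mathcal{B}_{\text{out}}$. The $(i-1)n$ contributions still cancel, the constant $\frac{n}{2}(n^2+1)$ is unchanged, and the remaining $j$-difference only flips sign, so it again vanishes by Lemma~\ref{lema1}. Hence every row of $A$ sums to $\mathcal{C}(A)$, which is the desired conclusion.

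The only real obstacle is bookkeeping: aligning the block pattern displayed in Step~\ref{step1} with the two-branch formula in Step~\ref{step2} and tracking which of $\mathcal{B}_{\text{out}}$ and $\mathcal{B}_{\text{in}}$ supplies the $X$-entries in each case. Once that matching is laid out explicitly, the algebra is routine and Lemma~\ref{lema1} delivers the single nontrivial cancellation required.
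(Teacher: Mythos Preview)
Your proposal is correct and follows essentially the same approach as the paper: split the row sum according to whether $j\in\mathcal{B}_{\text{out}}$ or $j\in\mathcal{B}_{\text{in}}$, observe that the $(i-1)n$ terms cancel because each index set has size $n/2$, and invoke Lemma~\ref{lema1} to kill the residual $\sum_{\mathcal{B}_{\text{out}}} j - \sum_{\mathcal{B}_{\text{in}}} j$. Your write-up is in fact slightly more explicit than the paper's, since you spell out the block-to-index-set correspondence and handle the $i\in\mathcal{B}_{\text{in}}$ case rather than dismissing it as ``strictly analogous''.
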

\begin{proof}
    \rev{As a consequence of the block decomposition in Step~\ref{step1} and the definition of the elements $a_{ij}$ in Step~\ref{step2},  for the rows in the outer index set, $i\in \mathcal{B}_{\rev{\text{out}}} $, we \rev{fix} $i$ and apply Lemma~\ref{lema1} to obtain }
    $$
    \begin{array}{rcl}
       \rev{  \displaystyle\sum_{j=1}^n a_{ij}}   & \rev{=}&  \rev{  \displaystyle\sum_{j\in \mathcal{B}_{\rev{\text{out}}}} \rev{(}(i-1)n + j\rev{)} + \displaystyle\sum_{j\in \mathcal{B}_{\rev{\text{in}}}}\rev{(}n^2 + 1 - (n(i-1) + j)\rev{)}} \\
       &\rev{=} & \rev{\dfrac{n}{2}((i-1)n) + \rev{\frac{n}{2}}(n^2 +1) -\rev{\frac{n}{2}}(n(i-1)) + \displaystyle\sum_{j\in \mathcal{B}_{\rev{\text{out}}}}  j - \displaystyle\sum_{j\in \mathcal{B}_{\rev{\text{in}}}}  j}\\ 
         &\rev{=} & \rev{\dfrac{n(n^2+1)}{2} = \mathcal{C}(A)}.
    \end{array}
    $$
\rev{The case for $i \in \mathcal{B}_{\rev{\text{in}}}$ is strictly analogous to the one above. Therefore, for every row $i \in \{1, \dots, n\}$, the sum of the elements is $\mathcal{C}(A)$.}
\end{proof}

\rev{We now prove that the sum of the elements in each column of the matrix generated by Algorithm~\ref{algo:doubleeven} is also $\dfrac{n(n^2+1)}{2}$.}

\begin{lemma}\label{columdouble}
  \rev{  Given that $n$ is \rev{doubly even}, the matrix $A$ generated by Algorithm~\ref{algo:doubleeven} satisfies the column sum property, where each sum equals the magic constant:
\[ \mathcal{C}(A) = \frac{n(n^2+1)}{2} .\]}
\end{lemma}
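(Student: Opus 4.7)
The plan is to mirror the structure of the proof of Lemma~\ref{rowdouble}, exploiting the fact that the block pattern of $A$ in Step~\ref{step1} is symmetric under transposition: block columns $1$ and $4$ contain $X$ blocks in the outer block-rows and $Y$ blocks in the inner block-rows, while block columns $2$ and $3$ exhibit the opposite pattern. Consequently, fixing a column index $j$ plays exactly the same role for a column sum as fixing a row index $i$ did for the row sum, just with the roles of the outer/inner index sets interchanged when $j$ lies in $\mathcal{B}_{\text{in}}$ rather than in $\mathcal{B}_{\text{out}}$.

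First I would handle the case $j \in \mathcal{B}_{\text{out}}$. By the block pattern, the entries $a_{ij}$ with $i \in \mathcal{B}_{\text{out}}$ lie in $X$ and those with $i \in \mathcal{B}_{\text{in}}$ lie in $Y$. Using Step~\ref{step2}, the column sum therefore decomposes as
\[
\sum_{i=1}^{n} a_{ij} \;=\; \sum_{i \in \mathcal{B}_{\text{out}}}\bigl((i-1)n + j\bigr) \;+\; \sum_{i \in \mathcal{B}_{\text{in}}}\bigl(n^2+1-(n(i-1)+j)\bigr).
\]
Expanding and grouping the terms that depend on $j$ (there are $n/2$ of each sign, so they cancel), the expression reduces to $\tfrac{n}{2}(n^2+1) + n\bigl(\sum_{i\in\mathcal{B}_{\text{out}}}(i-1) - \sum_{i\in\mathcal{B}_{\text{in}}}(i-1)\bigr)$. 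Since $|\mathcal{B}_{\text{out}}| = |\mathcal{B}_{\text{in}}| = n/2$, the bracketed difference equals $\sum_{i\in\mathcal{B}_{\text{out}}} i - \sum_{i\in\mathcal{B}_{\text{in}}} i$, which vanishes by Lemma~\ref{lema1}. The sum is therefore $\tfrac{n(n^2+1)}{2} = \mathcal{C}(A)$.

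For $j \in \mathcal{B}_{\text{in}}$, the block pattern reverses: entries with $i \in \mathcal{B}_{\text{out}}$ now lie in $Y$ and those with $i \in \mathcal{B}_{\text{in}}$ lie in $X$. An identical computation yields the same value $\tfrac{n(n^2+1)}{2}$, so the argument is strictly analogous to the previous case.

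There is no real obstacle here: the only subtlety worth flagging is that Lemma~\ref{lema1} was written for the column-index variable $j$, whereas the column-sum identity needs it applied to the row-index variable $i$. This is harmless because the lemma is purely an arithmetic statement about the index sets $\mathcal{B}_{\text{out}}$ and $\mathcal{B}_{\text{in}}$, independent of whether the variable is called $i$ or $j$; one may either re-index directly or invoke the lemma verbatim. Apart from this cosmetic point, the proof is essentially a transcription of Lemma~\ref{rowdouble} with the outer/inner block pattern read column-wise.
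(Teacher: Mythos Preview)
Your proposal is correct and follows essentially the same approach as the paper: fix $j$, split the column sum according to whether $a_{ij}$ lies in an $X$ or $Y$ block, and invoke Lemma~\ref{lema1} to cancel the residual $\sum_{i\in\mathcal{B}_{\text{out}}} i - \sum_{i\in\mathcal{B}_{\text{in}}} i$ term. Your assignment of $X$ to $i\in\mathcal{B}_{\text{out}}$ and $Y$ to $i\in\mathcal{B}_{\text{in}}$ for $j\in\mathcal{B}_{\text{out}}$ matches the block pattern in Step~\ref{step1}; the paper's displayed formula has these two index sets swapped, but since the difference vanishes by Lemma~\ref{lema1} either way, the computation and conclusion are identical.
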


\begin{proof}
    \rev{In view of the block decomposition in Step~\ref{step1} and the definition of the entries $a_{ij}$ in Step~\ref{step2}, for any column index $j \in \mathcal{B}_{\rev{\text{out}}}$, we \rev{fix} $j$ and using Lemma~\ref{lema1} obtain:}
    $$
    \begin{array}{rcl}
       \rev{  \displaystyle\sum_{i=1}^n a_{ij}}   & \rev{=}&  \rev{  \displaystyle\sum_{\rev{i}\in \mathcal{B}_{\rev{\text{in}}}} \rev{(}(i-1)n + j\rev{)} + \displaystyle\sum_{\rev{i}\in \mathcal{B}_{\rev{\text{out}}}}\rev{(}n^2 + 1 - (n(i-1) + j)\rev{)}} \\
       &\rev{=} & \rev{\dfrac{n}{2}(\rev{-n}+j + n^2 +1 +n-j) + n\left(\displaystyle\sum_{\rev{i}\in \mathcal{B}_{\rev{\text{out}}}}  i - \displaystyle\sum_{\rev{i}\in \mathcal{B}_{\rev{\text{in}}}}  i\right)}\\ 
         &\rev{=} & \rev{\dfrac{n(n^2+1)}{2} = \mathcal{C}(A)}.
    \end{array}
    $$
\rev{As with the row cases \rev{in} Lemma~\ref{rowdouble}, the proof for column indices $j \in \mathcal{B}_{\rev{\text{in}}}$ is analogous to the one above. Hence, for each column of $A$ the sum of the elements is $\mathcal{C}(A).$}

\end{proof}
\rev{Using the Lemmas above, the following theorem establishes the correctness of Algorithm~\ref{algo:doubleeven}.}
\begin{theorem}

\rev{Assuming $n$ is doubly even, Algorithm~\ref{algo:doubleeven} produces a magic square matrix with magic constant $\mathcal{C}(A) = \dfrac{n(n^2+1)}{2}$.}

    \end{theorem}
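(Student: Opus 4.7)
The plan is to verify the remaining defining properties of a magic square. Lemmas~\ref{rowdouble} and \ref{columdouble} already establish the row- and column-sum conditions, so what is left is to check the trace, the secondary diagonal, and the fact that the entries of $A$ form exactly the set $\{1,\ldots,n^2\}$ (which simultaneously covers uniqueness, the minimum, and the maximum).

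For the trace, I would first observe that the block pattern in Step~\ref{step1} has $X$-type blocks along its block-main-diagonal, so every position $(i,i)$ lies in an $X$ block. Hence $a_{ii}=(i-1)n+i$, and a standard arithmetic-series computation gives $\sum_{i=1}^{n} a_{ii} = n(n^2+1)/2 = \mathcal{C}(A)$. The secondary diagonal is handled identically: the block anti-diagonal also consists entirely of $X$ blocks, so $a_{i,n+1-i} = (i-1)n + (n+1-i)$, and another routine summation yields $\mathcal{C}(A)$.

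The main obstacle is showing that the entries of $A$ are exactly the integers $1,\ldots,n^2$, each appearing once. My plan is to set $f(i,j):=(i-1)n+j$ and $g(k):=n^2+1-k$, so that $a_{ij}=f(i,j)$ on $X$ positions and $a_{ij}=g(f(i,j))$ on $Y$ positions. A short algebraic manipulation shows $g(f(i,j))=f(\sigma(i,j))$, where $\sigma(i,j):=(n+1-i,\,n+1-j)$; thus the multiset of entries of $A$ equals $f(X)\cup f(\sigma(Y))$. The key step will be to prove $\sigma(Y)=Y$, which I expect to follow from the symmetry of the block layout in Step~\ref{step1}: $\sigma$ swaps block-rows $1\leftrightarrow 4$ and $2\leftrightarrow 3$ (and likewise for columns), and the $X/Y$ classification depends only on whether the block-row and block-column indices both lie in the outer pair $\{1,4\}$ or both in the inner pair $\{2,3\}$, so it is preserved by $\sigma$. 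Once $\sigma(Y)=Y$ is established, the fact that $\{X,Y\}$ partitions $\{1,\ldots,n\}^2$ and that $f$ is a bijection onto $\{1,\ldots,n^2\}$ immediately yields that the entries of $A$ form $f(X)\cup f(Y) = \{1,\ldots,n^2\}$ as a disjoint union, completing the proof.
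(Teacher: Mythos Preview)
Your proposal is correct, and for the diagonal sums it follows exactly the paper's argument: both diagonals lie entirely in $X$-blocks, so $a_{ii}=(i-1)n+i$ and $a_{i,n+1-i}=(i-1)n+(n+1-i)$, and two arithmetic-series computations give $\mathcal{C}(A)$.

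Where you go beyond the paper is in verifying that the entries of $A$ are precisely $\{1,\ldots,n^2\}$. The paper's own proof simply does not address this; it checks only the row, column, and diagonal sums and then declares $A$ a magic square, tacitly taking uniqueness and the range $[a_{\min},a_{\max}]$ for granted. Your argument via $f(i,j)=(i-1)n+j$, the complement involution $g$, and the symmetry $\sigma(i,j)=(n+1-i,n+1-j)$ is a clean and correct way to fill this gap: $g\circ f = f\circ\sigma$ is an easy identity, and $\sigma(Y)=Y$ follows from the fact that the $X/Y$ classification depends only on the unordered pair of block indices, which $\sigma$ preserves by swapping $1\leftrightarrow4$ and $2\leftrightarrow3$. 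So your proof is actually more complete than the one in the paper, at the cost of an extra paragraph.
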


\begin{proof}
    \rev{In view of Lemmas~\ref{rowdouble} and \ref{columdouble}, it remains only to show that the sums of the entries along both the main and secondary diagonals are equal to $\mathcal{C}(A)\rev{.}$}
    \rev{Note that the main and secondary diagonals only have elements in the $X$ blocks. Hence\rev{,}}
     $$
    \begin{array}{rcccl}
       \rev{  \displaystyle\sum_{i=1}^n a_{ii}}   & \rev{=}&  \rev{   \displaystyle\sum_{i=1}^n \rev{(}(i-1)n + i\rev{)}} &\rev{=}& \rev{-  n^2  +\displaystyle\sum_{i=1}^n (n+1)i} \\
       &\rev{=} & \rev{-n^2  + (n+1) \frac{\rev{n}(n+1) }{2}}
         &\rev{=} & \rev{\frac{n}{2}(-2n +(n+1)^2)} \\ 
            &\rev{=} &  \rev{\frac{n}{2}(-2n+ n^2+2n+1)}
         &\rev{=} & \rev{\frac{n}{2}(n^2+1)} \\
       &\rev{ = }& \rev{\mathcal{C}(A),} & \\ 
    \end{array}
    $$
\rev{and}
 $$
    \begin{array}{rcccl}
       \rev{  \displaystyle\sum_{i=1}^n a_{i,n-i+1}}   & \rev{=}&  \rev{   \displaystyle\sum_{i=1}^n \rev{(}(i-1)n + n-i+1\rev{)}} &\rev{=}& \rev{n(-n+n+1) +\displaystyle\sum_{i=1}^n (n-1)i} \\
       &\rev{=} & \rev{n  + (n-1) \frac{\rev{n}(n+1) }{2}}
         &\rev{=} & \rev{\frac{n}{2}(2 +(n+1)(n-1)\rev{)}} \\ 
            &\rev{=} & \rev{\frac{n}{2}(2 +n^2-1)} 
         &\rev{=} & \rev{\frac{n}{2}(n^2+1) } \\
       &\rev{ = } &\rev{\mathcal{C}(A).} & \\ 
    \end{array}
    $$
\rev{
That is, both diagonals sum to $\mathcal{C}(A)$, proving that for a doubly even $n$, Algorithm~\ref{algo:doubleeven} generates a magic square.}
\end{proof}

\subsection{Algorithm~\ref{algo:singleeven}}\label{subsec:correctnesssingleeven}

\rev{The design of Algorithm~\ref{algo:singleeven} is grounded in the principle of \textit{offset compensation} across quadrants. Therefore, to establish its correctness, the following lemmas are essential.}

\begin{lemma}\label{lema_until3}
\rev{Let $n$ be a singly even integer. The matrix $A$ constructed up to Step~\ref{algo:step3} of Algorithm~\ref{algo:singleeven} satisfies the following properties:}
\begin{enumerate}
    \item[(P1)] \rev{Every \rev{column} sum is equal to the magic constant $\mathcal{C}(A)$.}
    \item[(P2)] \rev{Each of the first $n/2$ rows has a sum of $\mathcal{C}(A) - \frac{n^3}{8}$.}
    \item[(P3)] \rev{Each of the last $n/2$ rows has a sum of $\mathcal{C}(A) + \frac{n^3}{8}$.}
    \item[(P4)] \rev{The main diagonal sum is $\mathcal{C}(A) - \frac{n^3}{4}$.}
    \item[(P5)] \rev{The secondary diagonal sum is $\mathcal{C}(A) + \frac{n^3}{4}$.}
\end{enumerate}
\end{lemma}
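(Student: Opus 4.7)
The plan is to express everything in terms of the magic constants of the four odd-order blocks $A^1,A^2,A^3,A^4$. Set $m\coloneqq n/2$ (which is odd, since $n$ is singly even) and $s\coloneqq n^2/4=m^2$. Each $A^\ell$ is, by the correctness of the Siamese method invoked in Step~\ref{algo:step3} and the general formula in Section~1, a magic square of order $m$ whose magic constant is
$$
\mathcal{C}(A^\ell)=\frac{m(m^2+1)}{2}+m(a^\ell_{\min}-1)=\frac{m(m^2+1)}{2}+(\ell-1)\,ms.
$$
All five claims then follow by identifying which of the four blocks contribute to each row/column/diagonal sum of $A$, adding the relevant two magic constants, and comparing with $\mathcal{C}(A)=\frac{n(n^2+1)}{2}=m(4m^2+1)$.

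Concretely, I would handle the properties in the following order. For (P1), observe from the block layout $A=\left[\begin{smallmatrix}A^1&A^3\\A^4&A^2\end{smallmatrix}\right]$ that for any column index $j\in\{1,\dots,m\}$ the column decomposes as the $j$-th column of $A^1$ stacked over the $j$-th column of $A^4$, so the sum equals $\mathcal{C}(A^1)+\mathcal{C}(A^4)$; for $j\in\{m+1,\dots,n\}$, the sum equals $\mathcal{C}(A^3)+\mathcal{C}(A^2)$. Using the formula above, both expressions simplify to $m(m^2+1)+3ms=m(4m^2+1)=\mathcal{C}(A)$. For (P2), each row $i\leq m$ consists of a row of $A^1$ followed by a row of $A^3$, whose sums give $\mathcal{C}(A^1)+\mathcal{C}(A^3)=m(m^2+1)+2ms$; comparing with $\mathcal{C}(A)=m(m^2+1)+3ms$ shows the deficit is exactly $ms=m^3=n^3/8$. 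The argument for (P3) is symmetric, using $\mathcal{C}(A^4)+\mathcal{C}(A^2)=m(m^2+1)+4ms$.

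For (P4) and (P5), the key geometric observation is that the main diagonal of $A$ runs entirely through the main diagonals of $A^1$ (for $i\leq m$) and $A^2$ (for $i\geq m+1$), so its sum is $\mathcal{C}(A^1)+\mathcal{C}(A^2)=m(m^2+1)+ms$; subtracting $\mathcal{C}(A)$ yields $-2ms=-n^3/4$. Similarly, the secondary diagonal of $A$ runs through the secondary diagonals of $A^3$ and $A^4$, summing to $\mathcal{C}(A^3)+\mathcal{C}(A^4)=m(m^2+1)+5ms$, which exceeds $\mathcal{C}(A)$ by $2ms=n^3/4$.

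The proof is almost entirely bookkeeping, so there is no real conceptual obstacle; the only point that requires a small amount of care is justifying that the main and secondary diagonals of $A$ indeed coincide with those of the corresponding diagonal blocks (because $m$ is odd, there is no fencepost issue at the block boundary). Once that is noted, each of the five identities reduces to the single arithmetic identity $m(4m^2+1)-m(m^2+1)=3ms$ applied with different coefficients, and the stated offsets $0$, $\pm n^3/8$, $\pm n^3/4$ fall out immediately.
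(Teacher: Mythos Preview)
Your proposal is correct and follows essentially the same approach as the paper: both arguments identify each row, column, and diagonal sum of $A$ as the sum of the magic constants of the two blocks it traverses, and then reduce everything to straightforward arithmetic with $\mathcal{C}(A^\ell)=\frac{m(m^2+1)}{2}+(\ell-1)m\cdot\frac{n^2}{4}$. Your notation (working with $m=n/2$ and $s=m^2$ and comparing everything to $\mathcal{C}(A)=m(4m^2+1)$) is a bit more streamlined than the paper's explicit case-by-case substitutions, but the underlying strategy is identical.
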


\begin{proof}
\rev{In Step \ref{algo:step3}, by constructing each sub-matrix $A^\ell$ as an odd-order magic square via the Siamese method, we ensure that each quadrant possesses an internal magic constant $\mathcal{C}(A^\ell)$, given by:
\begin{equation*}
    \mathcal{C}(A^\ell) = \frac{p(p^2 + 1)}{2} + p \cdot (a^\ell_{\min}-1) 
\end{equation*}
\rev{where $p = n/2$ and}
\begin{equation*}
    a_{\min}^1 = 1, \quad a_{\min}^2 = \frac{n^2}{4} + 1, \quad a_{\min}^3 = \frac{2n^2}{4} + 1,\quad   a_{\min}^4 = 3\frac{n^2}{4} + 1.
\end{equation*}}

\rev{Using these facts, we can now prove the properties above:
\begin{itemize}
    \item[(P1)] \rev{For the columns in the left half, the sum is given by the sum of the magic constants of the vertical quadrants, $\mathcal{C}(A^1) + \mathcal{C}(A^4)$. Substituting the values for $a^1_{\min}$ and $a^4_{\min}$ and rearranging the terms\rev{,} we obtain:}
    $$
  \begin{array}{rcl}
  \mathcal{C}(A^1) + \mathcal{C}(A^4) &=& \dfrac{p(p^2 + 1)}{2} + p(a^1_{\min}-1) + \dfrac{p(p^2 + 1)}{2} + p(a^4_{\min}-1) \\
  &=& p(p^2 + 1) + p\left( 2+ 3\dfrac{n^2}{4}-2\right) \\
  &=& \dfrac{n}{2}\left(\dfrac{n^2}{4} + 1\right) + \dfrac{3n^3}{8} = \dfrac{n^3}{8} + \dfrac{n}{2} + \dfrac{3n^3}{8}  \\
  &=& \dfrac{n(n^2 + 1)}{2} = \mathcal{C}(A).
\end{array} 
$$
An analogous derivation applies to the right half of the columns, where $\mathcal{C}(A^3) + \mathcal{C}(A^2) = \mathcal{C}(A)$. Thus, by construction, every column of the composite matrix $A$ satisfies the magic constant property.
\item[(P2)] \rev{A row in the upper half sums to $\mathcal{C}(A^1) + \mathcal{C}(A^3)$\rev{:}}
$$
 \begin{array}{rcl}
  \mathcal{C}(A^1) + \mathcal{C}(A^3) &=& \dfrac{p(p^2 + 1)}{2} + p(a^1_{\min}-1) + \dfrac{p(p^2 + 1)}{2} + p(a^3_{\min}-1) \\
  &=& p(p^2 + 1) + p\left( \dfrac{n^2}{2}\right) = \dfrac{n}{2}\left(\dfrac{n^2}{4} + 1 +\dfrac{n^2}{2}\right) \\
  &=& \dfrac{n\left(\dfrac{3n^2}{4} + 1\right)}{2} = \dfrac{n(n^2 + 1)}{2} - \dfrac{n}{2} \dfrac{n^2}{4 } \\ &=& \mathcal{C}(A) - \dfrac{n^3}{8}.
\end{array} 
$$
\item[(P3)] Given a row in the lower half, the sum of all elements in this row satisfies $\mathcal{C}(A^4) + \mathcal{C}(A^2)$, and using the second row of the expression above, we obtain
$$
 \begin{array}{rcl}
  \mathcal{C}(A^2) + \mathcal{C}(A^4) 
  &=& p(p^2 + 1) + p\left( n^2 \right) = \mathcal{C}(A^1) + \mathcal{C}(A^3) + p\left(\dfrac{n^2}{2} \right)   \\
  &=&  \mathcal{C}(A) - \dfrac{n^3}{8} + \dfrac{n^3}{4} =  \mathcal{C}(A) + \dfrac{n^3}{8}.
\end{array} 
$$
\item[(P4)] \rev{Observe that \rev{the} main diagonal sum is $\mathcal{C}(A^1) + \mathcal{C}(A^2)$, and, again using the second row of expression in (P2)\rev{,} we obtain\rev{:}}
$$
 \begin{array}{rcl}
  \mathcal{C}(A^1) + \mathcal{C}(A^2) 
  &=& p(p^2 + 1) + p\left( \dfrac{n^2}{4} \right) = \mathcal{C}(A^1) + \mathcal{C}(A^3) - p\left(\dfrac{n^2}{4} \right)   \\
  &=&  \mathcal{C}(A) - \dfrac{n^3}{8} - \dfrac{n^3}{8} =  \mathcal{C}(A) - \dfrac{n^3}{4}.
\end{array} 
$$
\item[(P5)] The sum of the secondary diagonal is $\mathcal{C}(A^3) + \mathcal{C}(A^4)$. And, using the first equality of expression in (P4)\rev{,} we obtain\rev{:}
$$
 \begin{array}{rcl}
  \mathcal{C}(A^4) + \mathcal{C}(A^3) 
  &=& p(p^2 + 1) + p\left( \dfrac{5n^2}{4} \right)\\ &=& \mathcal{C}(A^1) + \mathcal{C}(A^2) + p\left(n^2 \right)   \\
  &=&  \mathcal{C}(A) - \dfrac{n^3}{4} + \dfrac{n^3}{2} =  \mathcal{C}(A) + \dfrac{n^3}{4}.
\end{array} 
$$
\end{itemize}}
\end{proof}

\rev{\rev{Given} Lemma~\ref{lema_until3}, it remains to be shown that the permutations executed in Steps \ref{step4}--\ref{algo:step10} satisfy three critical conditions: first, that the sum of the columns of $A$ is maintained; second, that the exchange process effectively redistributes the numerical mass by adding $n^3/8$ \rev{to the rows of the first half} and subtracting it \rev{from} the second half; and finally, that the diagonals are corrected by adding $n^3/4$ to the main diagonal while simultaneously removing the same amount from the anti-diagonal. This redistribution ensures that all row and diagonal sums \rev{equal} $\mathcal{C}(A)$.}

\rev{Note that the exchanges in Step~\ref{algo:step10} are strictly vertical. Consequently, the column membership of each element remains unchanged, ensuring that the column sums established in Lemma~\ref{lema_until3} are invariant under these operations. To facilitate the verification of the remaining conditions regarding the row and diagonal sums, we present the following lemmas.}

\begin{lemma}\label{lema38}
\rev{Assuming $n$ \rev{is} a singly even natural \rev{number}, the exchanges $A^1_1 \leftrightarrow A^4_1$, $A^1_3~\leftrightarrow~A^4_3$, and $w_1 \leftrightarrow w_4$ in Step~\ref{algo:step10} of Algorithm~\ref{algo:singleeven} add a total value of $c \coloneqq \frac{3n^3- 6n^2}{16}$ to each row in the upper half of the matrix, while simultaneously removing the same value $c$ from each row in the lower half.}
\end{lemma}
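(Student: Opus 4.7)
The plan is to exploit the fact that the Siamese method is a deterministic placement procedure whose geometric pattern depends only on the order of the sub-matrix. Since $A^1$ and $A^4$ are both constructed via the Siamese method on a grid of the same odd order $p = n/2$, with minimum elements $a^1_{\min} = 1$ and $a^4_{\min} = \frac{3n^2}{4} + 1$, I would first establish the invariant that for every position $(i,j) \in \{1,\ldots,p\} \times \{1,\ldots,p\}$, the entry of $A^4$ at $(i,j)$ equals the entry of $A^1$ at the same position plus $\Delta := a^4_{\min} - a^1_{\min} = \frac{3n^2}{4}$. This follows because the Siamese method places the $k$-th integer (for $k = 1,\ldots,p^2$) at a position determined solely by $k$ and $p$, so the two matrices agree as permutation arrays and differ only by the constant base offset.

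Given this invariant, every entry of $A^4$ that moves into the upper half of $A$ exceeds the entry it displaces by exactly $\Delta$, while the reverse swap decreases each entry displaced into the lower half by $\Delta$. The next step is simply to count, row by row, how many entries are affected by the three exchanges. Since $A^1_1$ is the top-left $m\times m$ block of $A^1$, $A^1_3$ is the bottom-left $m\times m$ block, and $w_1^\top$ occupies the $m$ middle-row entries immediately following $y_1^\top$, one checks that each of the rows $1,\ldots,m$ of $A^1$ contains exactly $m$ affected entries (from $A^1_1$), row $m+1$ contains exactly $m$ (from $w_1$), and each of the rows $m+2,\ldots,p$ contains exactly $m$ (from $A^1_3$). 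Hence every one of the first $n/2$ rows of $A$ gains exactly
\[
m\,\Delta = \frac{n-2}{4}\cdot \frac{3n^2}{4} = \frac{3n^3 - 6n^2}{16} = c.
\]

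The identical counting argument, applied to $A^4_1$, $w_4$, and $A^4_3$, shows that each of the last $n/2$ rows of $A$ loses exactly $c$ under the same three exchanges, which completes the proof. The main point requiring care is the initial invariant linking $A^1$ and $A^4$ position by position; once this is justified from the determinism of the Siamese method, everything else reduces to routine bookkeeping of row indices and block dimensions.
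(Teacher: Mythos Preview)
Your proposal is correct and mirrors the paper's own proof: both arguments rest on the positional invariant $a^4_{ij}=a^1_{ij}+\tfrac{3n^2}{4}$ inherited from the determinism of the Siamese method, and then count that exactly $m=\tfrac{n-2}{4}$ entries per row participate in the three exchanges, yielding $m\cdot\tfrac{3n^2}{4}=c$. Your row-by-row breakdown ($A^1_1$ for rows $1,\dots,m$; $w_1$ for row $m{+}1$; $A^1_3$ for rows $m{+}2,\dots,p$) is slightly more explicit than the paper's, but the substance is identical.
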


\begin{proof}
\rev{By Step~\ref{algo:step3}, each quadrant $A^\ell$ is a magic square with a minimum element $a^\ell_{\min} = \frac{(\ell - 1)n^2}{4} + 1$. Consequently, the elements of $A^\ell$ satisfy the following offset relationship:
\begin{equation}\label{formaij}
    a_{ij}^\ell = a_{ij}^1 + \dfrac{(\ell -1)n^2}{4}, \quad \forall \ell \in [1,4].
\end{equation}}

\rev{Specifically, the relationship between the values in $A^4$ and $A^1$ is given by $a^4_{ij} = a^1_{ij} + \frac{3n^2}{4}$. Thus, when the exchanges 1, 2, and 3 in Step~\ref{algo:step10} occur, each element from $A^4$ moved to the upper half adds the offset value of $\frac{3n^2}{4}$ to its respective row. Conversely, each element from $A^1$ moved to the lower half results in the removal of $\frac{3n^2}{4}$ from those row sums.} 

\rev{Because the blocks $A^\ell_r \in \mathds{N}^{\frac{n-2}{4} \times \frac{n-2}{4}}$ \rev{and} $w^\ell \in\mathds{N}^{\frac{n-2}{4}} $ for $\ell \in [2,3] $ and $r\in[1,4]$\rev{,} we permute exactly $\frac{n-2}{4}$ elements in each row. That is, the total numerical mass \rev{added} to each row in the upper half is $\frac{3n^2}{4}\times \frac{n-2}{4}= c$, with the same amount $c$ being removed from each row in the lower half.}
\end{proof}

\rev{It remains to determine the impact of the fourth exchange in Step~\ref{algo:step10} on the row sums.}

\begin{lemma}\label{lema39}
\rev{Consider $n$ to be a singly even natural \rev{number}. The exchange $B^3_2 \leftrightarrow B^2_2$ presented in Step~\ref{algo:step10} of Algorithm~\ref{algo:singleeven} removes a total value of $d \coloneqq \frac{n^3-6n^2}{16}$ \rev{from} each row in the upper half of the matrix, while simultaneously adding the same value $d$ \rev{to} each row in the lower half.}
\end{lemma}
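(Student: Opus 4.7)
The plan is to reduce the claim to a single pointwise application of the offset relation \eqref{formaij} established inside the proof of Lemma~\ref{lema38}. Setting $\ell = 2$ and $\ell = 3$ there gives $a^3_{ij} - a^2_{ij} = n^2/4$, so every entry of $A^3$ exceeds the corresponding entry of $A^2$ by exactly the same constant. This is the key algebraic fact on which everything rests.

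The next step is to translate the block swap into a per-row count. Since $A^3$ sits in the upper-right quadrant of $A$ and $A^2$ in the lower-right, the blocks $B^3_2$ and $B^2_2$ occupy the same $(n-6)/4$ rightmost columns of $A$ but on opposite halves of the matrix. The exchange preserves relative positions within the block, so along each row $i \in [1, n/2]$ of the upper half, one loses $(n-6)/4$ entries of $A^3$ and gains the corresponding entries of $A^2$; multiplying the per-entry decrement $n^2/4$ by the number $(n-6)/4$ of swapped columns yields the advertised total change of $(n^3 - 6n^2)/16 = d$. The argument for the lower half is strictly symmetric, only with the sign reversed.

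I do not anticipate a serious obstacle. The only points to be careful about are (i) that the swap is defined block-to-block with relative positions preserved, which is precisely what licenses the pointwise use of \eqref{formaij}; and (ii) that $(n-6)/4$ is a nonnegative integer under the singly-even hypothesis, which follows because $n \equiv 2 \pmod 4$ forces $n-6 \equiv 0 \pmod 4$. Once these points are made explicit, the computation is essentially a one-liner and parallels the proof of Lemma~\ref{lema38} verbatim, with $n^2/4$ replacing $3n^2/4$ as the per-element offset.
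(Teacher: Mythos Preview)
Your proposal is correct and follows essentially the same route as the paper's proof: both use the offset relation \eqref{formaij} to obtain the per-entry difference $a^3_{ij}-a^2_{ij}=n^2/4$, then multiply by the $(n-6)/4$ swapped columns per row to reach $d$. Your extra remarks on position preservation and the integrality of $(n-6)/4$ are sound and add a bit of detail the paper leaves implicit, but the underlying argument is the same.
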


\begin{proof}
\rev{By Expression~\eqref{formaij}, the relationship between the values in $A^3$ and $A^2$ is given by $a^3_{ij} = a^1_{ij} + \frac{2n^2}{4}$ and $a^2_{ij} = a^1_{ij} + \frac{n^2}{4}$. Thus, $a^3_{ij} = a^2_{ij} + \frac{n^2}{4}$. This means that when the exchange $B^3_2 \leftrightarrow B^2_2$ is performed, for each exchanged element in the rows of the upper half, we remove the offset value of $\frac{n^2}{4}$. \rev{Analogously}, for each exchanged element in the rows of the lower half, we add $\frac{n^2}{4}$.}

\rev{Since $B^\ell_2 \in \mathds{N}^{\frac{n}{2} \times \frac{(n-6)}{4}}$, for $\ell \in [2,3],$ we permute exactly $\frac{n-6}{4}$ elements per row\rev{; thus,} the total numerical mass \rev{removed from} each row in the upper half is $\frac{n^2}{4} \times \frac{n-6}{4} =d$, with the same amount $d$ being added \rev{to} each row in the lower half.}
\end{proof}

\rev{After the analysis of the \rev{rows}, it remains to investigate how these exchanges redistribute the numerical mass along the diagonals.}

\begin{lemma}\label{lema310}
\rev{The matrix $A$ generated by Algorithm~\ref{algo:singleeven} satisfies the property that both the main and secondary diagonal sums are equal to the magic constant $\mathcal{C}(A)$.}
\end{lemma}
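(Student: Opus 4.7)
The plan is to reduce the lemma to showing that the four sub-block exchanges performed in Step~\ref{algo:step10} shift the main diagonal sum up by exactly $n^3/4$ and the secondary diagonal sum down by exactly $n^3/4$. Indeed, by properties (P4) and (P5) of Lemma~\ref{lema_until3}, the matrix obtained right after Step~\ref{algo:step3} has main and secondary diagonal sums equal to $\mathcal{C}(A)-n^3/4$ and $\mathcal{C}(A)+n^3/4$, respectively, so these are precisely the corrections that must be delivered by Step~\ref{algo:step10}.

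The key observation is that every one of the four exchanges is purely vertical: a sub-block sitting entirely in the upper half of $A$ is swapped with a sub-block of the same shape occupying the same columns in the lower half. By Expression~\eqref{formaij}, an entry involved in a swap between $A^1$ and $A^4$ therefore changes by $\pm 3n^2/4$, while an entry involved in the swap between $A^2$ and $A^3$ changes by $\pm n^2/4$. Setting $m:=(n-2)/4$ and writing each block as a rectangle in the coordinates of $A$, I would then carry out a case analysis locating every intersection of the main diagonal $\{(i,i)\}$ and the secondary diagonal $\{(i,n-i+1)\}$ with the eight swapped sub-blocks. The expected tallies are: exchange~1 meets the main diagonal in $m$ positions of $A^1_1$ and the secondary diagonal in none; exchange~2 meets the secondary diagonal in $m$ positions of $A^4_3$ and the main diagonal in none; exchange~3 meets each diagonal in exactly one position, namely the first entries of $w_1$ and of $w_4$, respectively; and exchange~4 meets the main diagonal in $(n-6)/4$ positions of $B^2_2$ and the secondary diagonal in $(n-6)/4$ positions of $B^3_2$.

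Combining these counts with the per-entry offsets gives a total main-diagonal increment of
\[
m\cdot\tfrac{3n^2}{4}+\tfrac{3n^2}{4}+\tfrac{n-6}{4}\cdot\tfrac{n^2}{4}=\tfrac{n^2}{16}\bigl(3(n-2)+12+(n-6)\bigr)=\tfrac{n^3}{4},
\]
and, by the symmetric tally, a secondary-diagonal change of $-n^3/4$, matching exactly the deficit and surplus recorded in (P4) and (P5). The main obstacle I anticipate is the geometric bookkeeping behind the case analysis, and in particular the interaction of the diagonals with the middle rows $w_1$ and $w_4$. The secondary diagonal \emph{does} touch $w_4$ at its first cell, a coincidence forced by the identities $n/2+m+1=(3n+2)/4$ and $n-(m+1)+1=(n+2)/4$ that arise precisely because $n$ is singly even; overlooking this lone entry would throw the secondary-diagonal total off by $3n^2/4$ and the cancellation would fail. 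Hence the verification hinges on carefully exploiting the singly-even structure of $n$, which ensures that $(n-2)/4$ and $(n-6)/4$ are nonnegative integers and pins down the exact membership of each diagonal position in the partitioned quadrants.
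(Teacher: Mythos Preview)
Your proposal is correct and follows essentially the same route as the paper: start from properties (P4)--(P5) of Lemma~\ref{lema_until3}, count how many diagonal positions lie in each swapped sub-block, multiply by the per-entry offset $3n^2/4$ or $n^2/4$ coming from~\eqref{formaij}, and verify the totals $\pm n^3/4$. The only difference is granularity: you treat the four exchanges separately (recording $m$ hits from exchange~1, one from exchange~3, etc.), whereas the paper lumps exchanges~1 and~3 together into a single count of $(n+2)/4$ diagonal entries; your breakdown also correctly attributes the secondary-diagonal hits in the $A^1/A^4$ column range to exchange~2 rather than exchange~1, a point the paper states somewhat loosely.
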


\begin{proof}
\rev{Following the logic established in the previous lemmas, we track the displacement of numerical mass along the diagonals. For the main diagonal, the exchanges $A^1_1 \leftrightarrow A^4_1$ and $w_1 \leftrightarrow w_4$ affect exactly $k = \frac{n-2}{4} + 1 = \frac{n+2}{4}$ elements. Each of these elements $a_{ii}$ is replaced by an element with an offset larger by $\frac{3n^2}{4}$. Consequently, this portion of the main diagonal sum increases by:
\begin{equation*}
    \left(\frac{n+2}{4}\right) \times \frac{3n^2}{4} = \frac{3n^3 + 6n^2}{16}.
\end{equation*}}
\rev{Additionally, the exchange $B^3_2 \leftrightarrow B^2_2$ involves $\frac{n-6}{4}$ elements on the main diagonal, which are increased by $\frac{n^2}{4}$. Referring back to Property (P4) of Lemma~\ref{lema_until3}, the final sum is:
\[
\begin{array}{rcl}
    \displaystyle\sum_{i=1}^n a_{ii} &=&  \mathcal{C}(A) - \dfrac{n^3}{4} + \dfrac{3n^3 + 6n^2}{16} + \dfrac{n-6}{4}\times \dfrac{n^2}{4} \\
    &=& \mathcal{C}(A) - \dfrac{n^3}{4} + \dfrac{3n^3 + 6n^2}{16} + \dfrac{n^3-6n^2}{16} \\
    &=& \mathcal{C}(A) + \dfrac{-4n^3 + 4n^3}{16} = \mathcal{C}(A).
\end{array}
\]}

\rev{Regarding the secondary diagonal, the first $\frac{n+2}{4}$ elements $a_{i, n-i+1}$ are affected by the exchanges $A^1_1 \leftrightarrow A^4_1$ and $w_1 \leftrightarrow w_4$, which decrease the sum of these entries by $\frac{3n^2}{4}$ per element. Furthermore, the exchange $B^3_2 \leftrightarrow B^2_2$ involves $\frac{n-6}{4}$ elements on the secondary diagonal that are decreased by $\frac{n^2}{4}$.}

\rev{Analogous to the main diagonal, we obtain:
\[
\displaystyle sum_{i=1}^n a_{i,n-i+1} = \mathcal{C}(A) + \dfrac{n^3}{4} - \dfrac{3n^3 + 6n^2}{16} - \dfrac{n-6}{4} \times \dfrac{n^2}{4} = \mathcal{C}(A).
\]}
\rev{Thus, both diagonal sums converge to the magic constant.}
\end{proof}

\rev{With all these lemmas at \rev{hand}, we can prove the main theorem.}

\begin{theorem}
\rev{When $n$ is singly even, Algorithm~\ref{algo:singleeven} produces a magic square matrix with magic constant $\mathcal{C}(A) = \dfrac{n(n^2+1)}{2}$.}
\end{theorem}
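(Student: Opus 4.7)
The plan is to assemble the four preceding lemmas into a verification of the six defining properties of a magic square. Since every exchange in Step~\ref{algo:step10} is strictly vertical, the column membership of every entry is preserved; hence property (P1) of Lemma~\ref{lema_until3} immediately transfers to the final matrix $A$, yielding the column condition. The two diagonal conditions are established directly by Lemma~\ref{lema310}, which already combines the deficits/surpluses from (P4)--(P5) with the diagonal impact of the exchanges in Step~\ref{algo:step10}, so no further argument is needed there.

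The heart of the proof is the row condition. For any row in the upper half, the sum after Step~\ref{algo:step3} carries a deficit of $n^3/8$ by (P2). Lemma~\ref{lema38} contributes $+c$ and Lemma~\ref{lema39} contributes $-d$ to that row, with $c = (3n^3-6n^2)/16$ and $d = (n^3-6n^2)/16$. A short arithmetic check gives $c - d = n^3/8$, exactly cancelling the deficit. The lower half is symmetric: the surplus of $n^3/8$ coming from (P3) is neutralized by the opposite contribution $-c + d = -n^3/8$. This establishes that every row of $A$ sums to $\mathcal{C}(A)$.

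The remaining ingredient is that the entries of $A$ form a permutation of $\{1, \ldots, n^2\}$. I would argue that the four Siamese sub-matrices $A^1, \ldots, A^4$ constructed in Step~\ref{algo:step3} are magic squares over the four disjoint consecutive ranges $[a^\ell_{\min}, a^\ell_{\min} + n^2/4 - 1]$, whose union is exactly $\{1, \ldots, n^2\}$; the permutations in Step~\ref{algo:step10} merely rearrange pre-existing entries, so uniqueness and coverage are preserved. Combined with the row, column, and diagonal conditions above, this shows that $A$ satisfies all six properties of a magic square of order $n$.

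The main obstacle is essentially cosmetic rather than conceptual: all the heavy lifting has been packaged into Lemmas~\ref{lema_until3}--\ref{lema310}, and the theorem reduces to checking that the local deficits and surpluses cancel. The one delicate point is the identity $c - d = n^3/8$; this is precisely the arithmetic that justifies the otherwise-unmotivated block widths $(n+6)/4$ and $(n-6)/4$ appearing in Step~\ref{algo:step7}, and confirming this compatibility is the only place where a careless step could derail the argument.
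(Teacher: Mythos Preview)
Your proposal is correct and follows essentially the same route as the paper's proof: columns via the verticality of the exchanges together with (P1), diagonals via Lemma~\ref{lema310}, and rows via the arithmetic cancellation $c-d=n^3/8$ combining Lemmas~\ref{lema38} and~\ref{lema39} with (P2)--(P3). Your explicit check that the entries form a permutation of $\{1,\ldots,n^2\}$ is a welcome addition that the paper's proof leaves implicit.
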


\begin{proof}
\rev{By Property (P1) of Lemma~\ref{lema_until3} and the fact that the exchanges in Step~\ref{algo:step10} are strictly vertical, the sum of each column in matrix $A$, generated by Algorithm~\ref{algo:singleeven}, remains $\mathcal{C}(A)$. Regarding the diagonals, Lemma~\ref{lema310} establishes that both diagonal sums equal $\mathcal{C}(A)$. We shall now address the row sums.}

\rev{The result of \rev{Lemma} \ref{lema38} combined with \rev{properties} (P2) and (P3) of Lemma~\ref{lema_until3}, demonstrate that after performing the first three exchanges in Step~\ref{algo:step10} of Algorithm~\ref{algo:singleeven}, the row sums are significantly adjusted. Specifically, each of the first $n/2$ rows now possesses a sum of:}
\begin{equation*}
\mathcal{C}(A) - \frac{n^3}{8} + \frac{3n^3-6n^2}{16} = \mathcal{C}(A) + \frac{n^3-6n^2}{16},
\end{equation*}
\rev{while the rows in the lower half (the last $n/2$ rows) conversely result in a sum of $\mathcal{C}(A) - \frac{n^3-6n^2}{16}$. }

\rev{Lemma~\ref{lema39} shows that the fourth exchange in Step~\ref{algo:step10} corrects this remaining imbalance. Hence, the sum of each row is also $\mathcal{C}(A)$ and the matrix $A$ is a magic square.}
\end{proof}

\section{Numerical experiments}\label{sec:numerical}
In this section, we present and discuss our computational results \rev{for constructing} magic squares with the two approaches presented in this paper:

\begin{itemize}
    \item CSP as given in Problem~\eqref{MCSP}, solved with \texttt{Gurobi} \citep{gurobi};
    \item \emph{Fast Approach}: As described in Section~\ref{sec:fastapproach}.
\end{itemize}
We use \texttt{Julia 1.9} \citep{BezansonEdelmanKarpinskiShah17} and the \rev{tests \rev{were}} conducted on a laptop with an Intel Core i9-13900HX processor with a clock speed of 5.40 GHz and 64 GB of RAM.
For any $n$, \rev{we consider} $a_{\min}=1$. The codes are fully available at 
\url{https://github.com/JoaoVitorPamplona/Magic_Square}. Note that the CSP model can also be solved using  open-source solvers, like CBC \cite{forrest2005cbc}. Even though there are other solvers for solving constraint satisfaction problems, since ours involves binary variables, we prefer to use Gurobi, which solves the problem even without an objective function.

The first comparison is the run time of the two approaches. Table~\ref{table1} shows that The CSP model's execution time grows exponentially \rev{as the order of the magic square increases}. This is expected and happens because there is a direct correlation between the value of $n$ and the quantity of binary variables. Table~\ref{table1} also shows that our \emph{Fast Approach} \rev{constructs} a magic square of order $n \leq 11$ in microseconds. 

\begin{table}[H]
    \caption{Time(s) comparison}
\label{table1}
    \centering
    \begin{tabular}{ccc}
        \toprule
        \textbf{Size} & \textbf{Fast Approach} & \textbf{CSP} \\
        \midrule
        3  & $\num{1.2e-6}$ & 0.01 \\
        4  & $\num{3.9e-6}$ & 0.05 \\
        5  & $\num{1.1e-6}$ & 0.06 \\
        6  & $\num{2.5e-6}$ & 0.46 \\
        7  & $\num{1.1e-6}$ & 0.26 \\
        8  & $\num{5.2e-6}$ & 16.75 \\
        9  & $\num{1.9e-6}$ & 29.89 \\
        10 & $\num{5.0e-6}$ & 55.16 \\
        11 & $\num{1.4e-6}$  & 180.95 \\
        \bottomrule
    \end{tabular}
\end{table}

Using our \emph{Fast Approach}\rev{,} we also \rev{constructed} magic squares of order $n~\in~[3,5000]$. As it is \rev{composed} of three distinct algorithms, we can evaluate their execution times:
\begin{itemize}
    \item \emph{Odd}: When $n$ is odd, the Siamese method as discussed in Subsection~\ref{subsec:hindu}.
    \item \emph{Doubly even}: If $n$ is doubly even, the approach proposed in Subsection~\ref{subsec:double}.
   \item \emph{Singly even}: If $n$ is even, but not doubly even, as proposed in Subsection~\ref{subsec:singleeven}.
\end{itemize}
For each order $n$, the magic square is constructed three times, and the minimum execution time is recorded using \texttt{Benchmark Tools} \citep{revels2021benchmarktools}. 

Figure \ref{fig19} presents the execution times, demonstrating that our approach can construct a magic square of order up to $5000$ in under 0.3 seconds. For each of the three distinct algorithms detailed previously, a \rev{quadratic function is fitted} to approximate the pairs (Magic square size, Time(s)).

\begin{figure}[H]
\centering
\includegraphics[scale=.50]{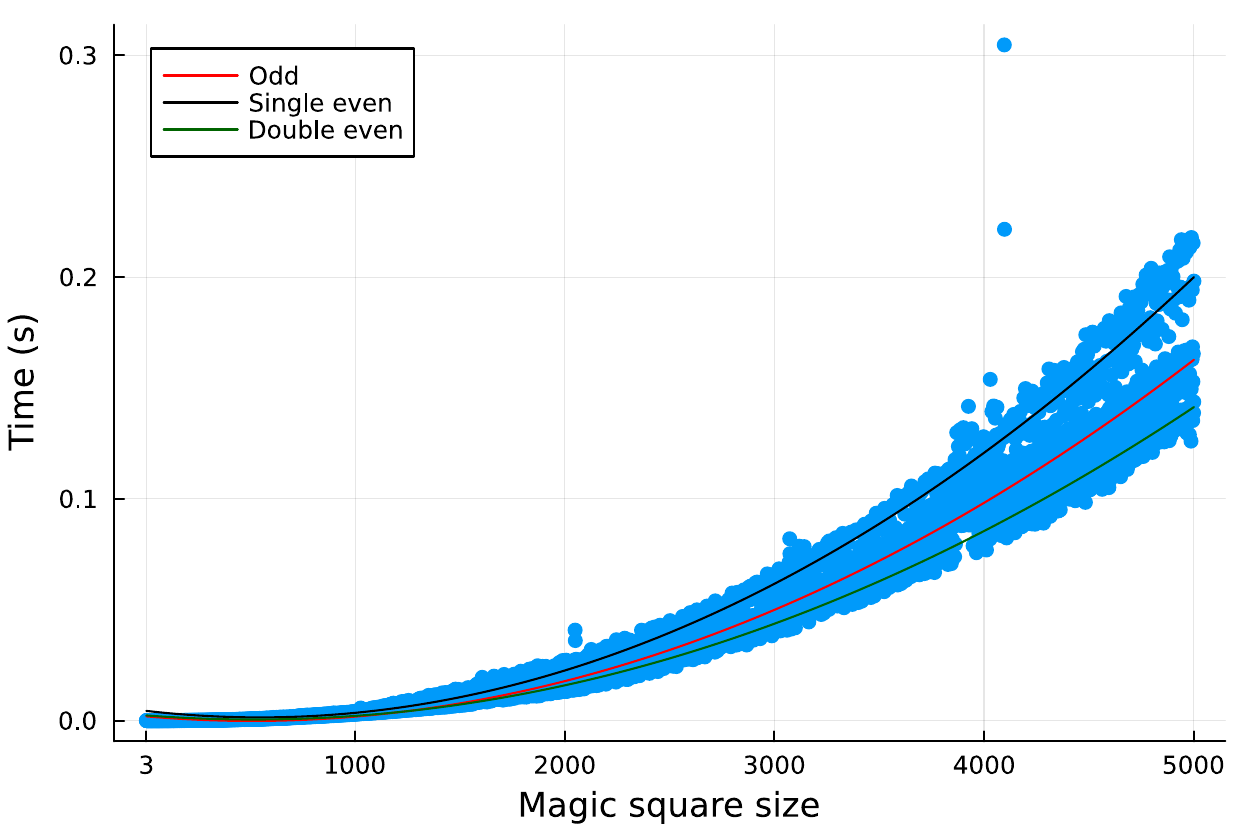}
\caption{Performance of  \emph{Fast Approach} for $n\in [3,5000]$.}
\label{fig19}
\end{figure}

Due to the high speed of our \emph{Fast Approach}\rev{,} we also \rev{computed} magic squares of higher orders. Indeed, for each $p\in [1,7]$ we \rev{computed} the magic squares of orders  
$p \times \num{10000}$, $p \times \num{10000}+1$ and $p \times \num{10000}+2$ once, and \rev{recorded} the time. \rev{These values of $n$ are chosen} to ensure that we are computing magic squares of the three different types of orders. We note that Julia is a just-in-time compiled language. In this case, the first execution is not considered.

Figure~\ref{fig20} shows these execution times as well as the approximation by quadratic functions. Notably, the singly even algorithm exhibits the highest computational complexity among the three. Note that our approach \rev{built} a magic square of order \num{70002} in less than \num{140} seconds, while the CSP Model requires \num{180} seconds to build a magic square of \rev{order 11}. \rev{This is a} notable difference, which highlights the robust performance of our approach, even with \rev{large} values of $n$. We would like to stress that a \num{70000} $\times$ \num{70000} \rev{matrix} consumes \rev{a substantial amount of} memory, so we \rev{did} not test our approach for larger values of $n$. Nevertheless, we believe that our proposed method can be used to construct magic squares with bigger magnitudes, as long as the memory suffices, for time complexity seems to be polynomial, as suggested by the quadratic approximation shown in these experiments.

\begin{figure}[H]
\centering
\includegraphics[scale=.50]{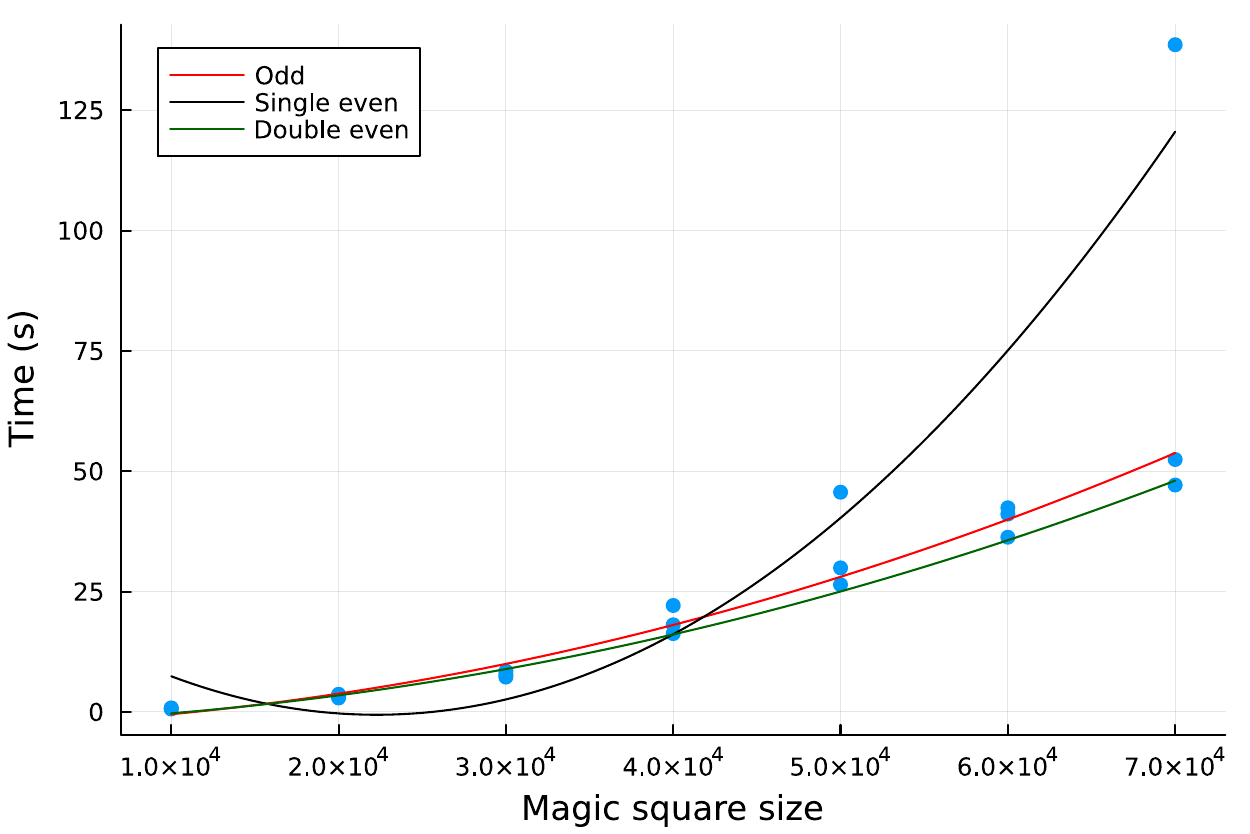}
\caption{Performance of \emph{Fast Approach} for large value of $n$.  }
\label{fig20}
\end{figure}

\section{Concluding remarks}\label{sec:conclusion}
In this paper, we proposed two approaches to construct a magic square of order \( n \). The first approach is a Constraint Satisfaction Problem model, which guarantees a valid magic square. The second algorithm, called the \emph{Fast Approach}, utilizes the classical Siamese method when the order \( n \) is odd. For even orders, we incorporate two novel techniques, applied depending on whether the order is doubly even or singly even. These techniques rely on simple operations, such as element swaps. \rev{We also provide a proof of correctness for the proposed approach.} Numerical results indicate that the computational time required for the CSP formulation increases rapidly as the order grows, as expected. In contrast\rev{,} \rev{our fast approach} constructed magic squares for all \( n \) between \num{3} and \num{5000} in under \num{0.3} seconds, and for \( n \) around \num{70000} in less than \num{140} seconds. Future research could explore \rev{the potential extension of the proposed approach} to related combinatorial structures, \rev{such as Sudoku or the $n$-Queens problem.}
\rev{}

\section*{Conflicts of interest}
All authors certify that they have no affiliations with or involvement in any organization or entity with any financial interest or non-financial interest in the subject matter or materials discussed in this manuscript.

\section*{Acknowledgments}
\rev{The authors would like to thank the anonymous reviewers for their valuable comments and suggestions that helped improve the quality of this paper. Part of this research was conducted while \textbf{JVP} was a doctoral researcher at the Department of Economics and \textbf{MEP} was a postdoctoral researcher at the Department of Mathematics, both at Trier University. \textbf{JVP} was partially funded by the German Federal Ministry of Education and Research (BMBF) through the OptimAgent Project'' and by the Deutsche Forschungsgemeinschaft (DFG) within the Research Training Group RTG 2126, Algorithmic Optimization''. \textbf{MEP} was also supported by the DFG RTG 2126 and by the Brazilian agency Conselho Nacional de Desenvolvimento Científico e Tecnológico (CNPq, Grant 314788/2025-5). \textbf{LRS} was partially funded by CNPq (Grants 310571/2023-5 and 407147/2023-3) and by the Fundação de Amparo à Pesquisa e Inovação do Estado de Santa Catarina (FAPESC, Edital 21/2024, Grant 2024TR002238).}

\bibliographystyle{plainnat}  
\bibliography{bibtex}

\end{document}